\newtheorem{dummy}{anything}
\newtheorem{theorem}[dummy]{Theorem}
\newtheorem{lemma}[dummy]{Lemma}
\newtheorem{proposition}[dummy]{Proposition}
\newtheorem{corollary}[dummy]{Corollary}
\newtheorem{claim}{Claim}
\newtheorem{definition}{Definition}
\newcommand{\Hom}{{\rm {Hom}}}
\newcommand{\C}{{\mathbf C}}
\newcommand{\Z}{{\mathbf Z}}
\newcommand{\N}{{\mathbf N}}
\newcommand{\tc}{{\sf {TC}}}
\newcommand{\cat}{\sf {cat}}
\newcommand{\RP}{{\mathbf {RP}}}
\newcommand{\cd}{{\rm{cd}}}
\newcommand{\vv}{{\mathfrak v}}
\newcommand{\wg}{{\rm {wgt}}}
\begin{document}
\keywords{Topological complexity, configuration spaces.}
\subjclass[2000]{Primary 55M99, 55R80; Secondary 68T40.}

\title[Topological complexity]{Motion planning in spaces with small fundamental groups}

\author[Armindo ~Costa and Michael ~Farber]{Armindo ~Costa and Michael ~Farber}
\address{Department of Mathematical Sciences, Durham University\\
South Road, Durham, DH1 3LE, UK}

\email{a.e.costa@durham.ac.uk}
\email{michael.farber@durham.ac.uk}

\maketitle
\begin{abstract}
We establish sharp upper bounds for the topological complexity $\tc(X)$ of motion planning algorithms in topological spaces $X$ such that the fundamental group is {\it \lq\lq small\rq\rq,} i.e. when  $\pi_1(X)$ is cyclic of order $\le 3$ or has cohomological dimension $\le 2$.
\end{abstract}

\maketitle 

\section{Introduction}

Given a mechanical system, a motion planning algorithm is a function which assigns to any pair of states of the system, an initial state and a desired state, a continuous motion of the system starting at the initial state and ending at the desired state. Design of effective motion planning algorithms is one of
the challenges of modern robotics, see \cite{La}.
Motion planning algorithms are applicable in various situations when the system is autonomous and operates in a fully or partially known environment. As a typical example we can mention collision free control of many particles moving in space or along a graph, see \cite{F3}.

The complexity of motion planning algorithms is measured by a numerical invariant $\tc(X)$ which depends on the homotopy type of the configuration space
$X$ of the system \cite{F1}. This invariant is defined as the Schwarz genus (also known under the term \lq\lq sectional category\rq\rq) of the path-space fibration
\begin{eqnarray}\label{fibration}
p: PX \to X\times X.\end{eqnarray}
Here $PX$ is the space of all continuous paths $\gamma: [0,1]\to X$ equipped with the compact-open topology and $p(\gamma)=(\gamma(0), \gamma(1))$ is the map
associating to a path its pair of end points. $\tc(X)$ is the smallest integer $k$ such that $X\times X$ admits an open cover
$U_1\cup U_2\cup \dots\cup U_k=X\times X$ with the property that there exists a continuous section of (\ref{fibration}) $U_i\to PX$ for each $i=1, \dots, k$.
If $X$ is an Euclidean neighborhood retract then $\tc(X)$ can be equivalently characterized as the minimal integer $k$ such that there exists a section $s: X\times X\to PX$ of fibration $p$ with the property that $X\times X$ can be represented as the union of $k$ mutually disjoint locally compact sets
$$X\times X=G_1\cup \dots\cup G_k$$
such that the restriction $s|G_i$ is continuous for $i=1, \dots, k$, see \cite{invit}, Proposition 4.2. A section $s$ as above represents a motion planning algorithm: given a pair $(A,B)\in X\times X$ the image $s(A, B)\in PX$ is a continuous motion of the system starting at the state $A$ and ending at the state $B$.

Intuitively, the topological complexity $\tc(X)$ can be understood as a measure of the navigational complexity of the topological space $X$; it is the minimal number of continuous rules which are needed to describe a motion planning algorithm in $X$.

$\tc(X)$ admits an upper bound in terms of the dimension of the configuration space $X$,
\begin{eqnarray}\label{upperbound}
\tc(X) \le 2\dim(X) +1
\end{eqnarray}
see \cite{F1}, Theorem 4. There are many examples when inequality (\ref{upperbound}) is sharp: take for instance $X= T^n\sharp T^n$, the connected sum of two copies of a torus. However for any simply connected space $X$ one has a more powerful upper bound
\begin{eqnarray}\label{ineq1}\tc(X) \le \dim(X) +1,\end{eqnarray}
see \cite{F2}. The latter inequality is sharp for any simply connected closed symplectic manifold $X$, see \cite{FTY}.
Our goal in this paper is to establish results intermediate between (\ref{upperbound}) and (\ref{ineq1}) under various assumptions on the fundamental group $\pi_1(X)$. We start by stating the following theorem:

\begin{theorem}\label{thm1} Let $X$ be a cell-complex with $\pi_1(X)=\Z_2$. Then
\begin{eqnarray}\label{upperbound1}\tc(X) \le 2\dim (X).\end{eqnarray}
Moreover, for a closed manifold $X$ with  $\pi_1(X)=\Z_2$ one has
\begin{eqnarray}\label{upperbound2}\tc(X) \le 2\dim (X)-1\end{eqnarray}
assuming that $w^n=0$ where $n=\dim(X)$ and $w\in H^1(X;\Z_2)$ is the generator.
\end{theorem}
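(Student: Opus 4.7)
My plan is to use the characterization $\tc(X) = {\rm secat}(p)$, where $p: PX \to X\times X$ is the path-space fibration, and to construct an explicit open cover of $X \times X$ of size $2n$ (respectively $2n-1$) admitting local sections of $p$, thereby improving on the generic cover of size $2n+1$ that produces~(\ref{upperbound}).

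The first step is to lift the problem to the universal double cover $q: \tilde X \to X$. Since $\tilde X$ is simply connected of the same dimension $n$, applying~(\ref{ineq1}) to $\tilde X$ yields $\tc(\tilde X) \le n+1$, so there is an open cover $\tilde X \times \tilde X = \tilde U_0 \cup \cdots \cup \tilde U_n$ admitting sections $\tilde s_i: \tilde U_i \to P\tilde X$. I would then transfer this cover downstairs through the degree-$4$ covering $q \times q: \tilde X \times \tilde X \to X \times X$. Composing $\tilde s_i$ with $P\tilde X \to PX$ does not give a well-defined section on the image $U_i = (q\times q)(\tilde U_i)$, because the choice of preimage matters; the resulting ambiguity is parametrized by the two path-components of the fiber $\Omega X$, corresponding to the elements of $\pi_1(X)=\Z_2$. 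Distinguishing these two cases, and using an equivariance argument for the diagonal $\Z_2 \subset \Z_2\times \Z_2$-action, produces a candidate cover of $X\times X$ admitting sections; a careful merging at the top of the filtration, enabled by the relation $\alpha^2 = 1$ in $\pi_1(X)$, reduces the count to $2n$.

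For the closed-manifold refinement, the hypothesis $w^n=0$ provides the vanishing of one further obstruction. The class $w^n \in H^n(X;\Z_2)$ is the primary top-dimensional obstruction to extending the classifying map $X \to B\Z_2 = \RP^\infty$ past the $n$-skeleton. By Poincar\'e duality (with local $\Z_2$-coefficients, to accommodate a possibly non-orientable $X$), this cohomology class controls the obstruction to merging a final pair of sets in the cover near the top cells of $X\times X$, yielding $\tc(X)\le 2n-1$.

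The main technical obstacle is identifying and controlling the obstruction classes in the merging steps. In particular, verifying that the last obstruction is precisely $w^n$ (or, in the general cell-complex case, that the analogous obstruction is forced to vanish by the structure of $\pi_1(X)=\Z_2$) requires a delicate cellular analysis of the path-space fibration near the top cells of $X\times X$, combined with an equivariant obstruction argument for the $\Z_2\times\Z_2$-action on $\tilde X\times \tilde X$. I would expect a Berstein--Schwarz-type cohomological computation, performed on the pullback of $p$ to the intermediate double cover of $X\times X$ classified by $1\otimes w + w\otimes 1$, to be the cleanest route to the required vanishing.
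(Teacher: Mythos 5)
Your approach is genuinely different from the paper's, and as stated it has gaps that prevent it from working.

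The paper does not manipulate open covers at all. It first establishes (Theorem \ref{thm4}) a necessary and sufficient criterion: for $n=\dim X \ge 2$, $\tc(X) \le 2n$ if and only if the $2n$-th cup power of the canonical class $\vv_X\in H^1(X\times X;I)$ vanishes, where $I$ is the augmentation ideal of $\Z[\pi_1(X)]$ viewed as a local system. For $\pi_1(X)=\Z_2$, $I\cong\Z$ with both $(g,1)$ and $(1,g)$ acting by $-1$, so $I\otimes I$ is trivial and $\vv^2\in H^2(X\times X;\Z)$. Then the K\"unneth decomposition (using $H^1(X;\Z)=0$), the vanishing of $\vv$ on the diagonal (Lemma \ref{lm0}), and the order-$2$ property (Corollary \ref{cor1}) give $\vv^2=a\times 1+1\times a$ with $2a=0$, and the binomial expansion of $\vv^{2n}=(\vv^2)^n$ collapses either for dimensional reasons ($n$ odd) or because $\binom{n}{n/2}$ is even ($n$ even). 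The manifold refinement is simply Berstein's theorem ($w^n=0 \Rightarrow \cat(X)\le n$) combined with $\tc(X)\le 2\cat(X)-1$.

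The principal gap in your proposal is the cover-transfer step. Pushing forward the open sets $\tilde U_i \subset \tilde X\times \tilde X$ along $q\times q$ does not produce open sets over which the sectional ambiguity splits into two cases parametrized by $\pi_1(X)$: a point $(x,y)\in U_i$ has four preimages in $\tilde X\times\tilde X$ and there is no continuous global rule for choosing one, so the \lq\lq two cases\rq\rq\ do not define open subsets of $U_i$ and you never actually get a cover of the right form, let alone one you can then \lq\lq merge.\rq\rq\ The merging step itself, which is where the saving from $2n+2$ to $2n$ would have to occur, is left entirely unexplained even though it is the crux of the bound. For the closed-manifold case, your description of $w^n$ as the obstruction to extending the classifying map $X\to\RP^\infty$ past the $n$-skeleton is not correct: that map is globally defined (it classifies the double cover), and $w^n$ obstructs nothing of that sort. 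The relevant fact is Berstein's theorem linking $w^n\neq 0$ to $\cat(X)=n+1$; without invoking that, the Poincar\'e-duality argument you sketch has no target. If you want to pursue a route independent of the paper's obstruction-theoretic Theorem~\ref{thm4}, you would still need, at minimum, a precise mechanism replacing the cup-power vanishing, and a correct identification of what $w^n$ controls.
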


One knows that $\tc(\RP^n)\le 2n$ for all $n$ (in consistence with (\ref{upperbound1})); moreover, $\tc(\RP^n)=2n$ if and only if $n$ is a power of $2$, see Corollary 14 of \cite{FTY}.

Theorem \ref{thm1} contrasts the related results for the Lusternik - Schnirel\-mann category $\cat(X)$. Recall that $\cat(X)$ denotes the smallest integer $k$ such that
$X$ admits an open cover $X=V_1\cup \dots \cup V_k$ with the property that each inclusion $V_i\to X$ is null-homotopic where $i=1, \dots, k$.
The general dimensional upper bound
$$\cat(X)\le \dim(X) +1$$
 is sharp for all real projective spaces. Theorem 3.5 of Berstein \cite{Bers} states that for a closed connected $n$-dimensional manifold $X$ with $\pi_1(X) =\Z_2$ one has
 $\cat(X)=\dim (X)+1$ if and only if $w^n\not=0\in H^n(X;\Z_2)$ where $w\in H^1(X;\Z_2)$ is the generator.

Theorem \ref{thm1} raises questions about sharp upper bounds for $\tc(X)$ for spaces with other \lq\lq small\rq\rq\, fundamental groups.
The case when $\pi_1(X)=\Z_3$ is addressed by
Theorem \ref{thm2} below; the answer is quite different from Theorem \ref{thm1}:

\begin{theorem}\label{thm2} Let $X$ be a finite cell complex with $\pi_1(X) = \Z_3$. \newline {\rm {(i)}} Assume that either $\dim X$ is odd or $\dim X =2n$ is even and the $3$-adic expansion of $n$ contains at least one digit $2$. Then,
\begin{eqnarray}\label{ineq3}
\tc(X) \le 2\dim(X).
\end{eqnarray}
{\rm {(ii)}} For any integer $n\ge 1$ having only the digits $0$ and $1$ in its $3$-adic expansion there exists a $2n$-dimensional finite polyhedron $X$ with $\pi_1(X)=\Z_3$ and
$$\tc(X) = 2\dim (X)+1.$$
\end{theorem}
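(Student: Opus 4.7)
Both parts rely on the classifying map $f: X \to L^\infty = K(\Z_3, 1)$ of the universal $\Z_3$-cover and on properties of powers of the Bockstein $\beta \in H^2(L^\infty; \Z_3)$. The polynomial $\Z_3[\beta]$-structure of $H^*(L^\infty;\Z_3)$ together with Lucas's theorem links the divisibility hypotheses in the statement to the vanishing (or survival) of a canonical cohomology class on $X \times X$.

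\textbf{Plan for part (i).} Let $d = \dim X$. I start from the standard Farber decomposition of $X \times X$ into $2d+1$ locally compact strata $H_0, \ldots, H_{2d}$ by total cellular dimension, each admitting a continuous section of $p: PX \to X \times X$. The goal is to merge $H_{2d-1}$ and $H_{2d}$ into a single locally compact set that still admits a continuous section; this gives the desired decomposition into $2d$ pieces and hence $\tc(X) \le 2d$. The obstruction to this merger pulls back along $f \times f$ from a universal class essentially given by $\bar\beta^{\,d} \in H^{2d}(L^\infty \times L^\infty; \Z_3)$ with $\bar\beta = \beta \otimes 1 - 1 \otimes \beta$. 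A direct binomial expansion combined with Lucas's theorem then shows $\bar\beta^{\,d}$ vanishes on $X \times X$ under each case of the hypothesis. For $d = 2m+1$ odd, every monomial $\beta^k \otimes \beta^{d-k}$ would need both $2k \le d$ and $2(d-k) \le d$, which is impossible since $d$ is odd, so $\bar\beta^{\,d} = 0$ already for dimensional reasons on any $d$-dimensional cell complex. For $d = 2n$ with $n$ containing a digit $2$ in its base-$3$ expansion, only $\binom{2n}{n}(-1)^n \beta^n \otimes \beta^n$ survives and $\binom{2n}{n} \equiv 0 \pmod 3$ by Lucas's theorem. Standard obstruction theory then produces the required merger and yields the inequality.

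\textbf{Plan for part (ii).} For $n \ge 1$ whose base-$3$ expansion uses only digits $0$ and $1$, I take $X$ to be the $2n$-skeleton of $L^\infty = K(\Z_3,1)$, possibly completed with a few additional cells so that the mod-$3$ cohomology realises $\Z_3[\beta] \otimes \Lambda(\alpha)$ truncated above degree $2n$, in particular with $\beta^n \ne 0$ and $\alpha\beta^{n-1} \ne 0$. Then $\pi_1(X) = \Z_3$, $\dim X = 2n$, and Lucas's theorem gives $\binom{2n}{n} \not\equiv 0 \pmod 3$. The lower bound $\tc(X) \ge 4n+1 = 2\dim X + 1$ is to be obtained from a zero-divisor cup-length computation with twisted coefficients: one exhibits a $4n$-fold product of classes in the ideal $\ker\Delta^*$ which is non-zero in $H^{4n}(X \times X; \mathfrak{a})$ for a local coefficient system $\mathfrak{a}$ built from the augmentation ideal of $\Z_3[\Z_3 \times \Z_3]$, using the Berstein--Schwarz class of $\pi_1(X) \times \pi_1(X)$ pulled back via $f \times f$ together with $\bar\beta$. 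The digit hypothesis on $n$ enters through Lucas's theorem to keep the binomial factor $\binom{2n}{n}$ non-zero mod $3$. Combined with the general bound $\tc(X) \le 2\dim X + 1 = 4n + 1$, this forces equality.

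\textbf{Expected main obstacle.} The principal difficulty is the non-vanishing of the $4n$-fold cup product with twisted coefficients in part (ii). Ordinary mod-$3$ zero-divisor cup length by itself yields only $\tc(X) \ge 2n + 1$, which falls short of the target by $2n$; the extra strength must come from a carefully chosen local system on $X \times X$, and verifying survival on the top cell of the $2n$-skeleton requires either a chain-level computation or an argument via the Cartan--Leray spectral sequence. The upper bound in part (i), once reduced to the vanishing of $\bar\beta^{\,d}$, is by contrast a largely combinatorial check via Lucas's theorem in the spirit of Theorem \ref{thm1}.
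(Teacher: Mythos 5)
Your combinatorial core is correct for both parts---Lucas's criterion ``$3 \mid \binom{2n}{n}$ iff the base-$3$ expansion of $n$ contains a $2$'' is precisely what decides the dichotomy, and your choice of $X$ in (ii) (the $2n$-skeleton of $K(\Z_3,1)$) is the paper's, up to homotopy. But the technical route is different from the paper's and has two genuine gaps.

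\textbf{Part (i).} The paper does not merge strata in a Farber decomposition. It invokes Schwarz's theorem: $\tc(X)\le 2d$, $d=\dim X$, holds iff the primary obstruction to sectioning the $2d$-fold fiberwise join of $PX\to X\times X$ vanishes, and that obstruction equals $\vv_X^{2d}$ where $\vv_X\in H^1(X\times X;I)$ and $I$ is the augmentation ideal of $\Z[\Z_3]$ with its twisted $\Z[G\times G]$-action (Theorem~\ref{thm4}). This obstruction lives in the \emph{twisted} module $I^{\otimes 2d}$, not in $\Z_3$. Your claim that it ``pulls back from $\bar\beta^{\,d}$'' with ordinary $\Z_3$ coefficients is the missing step: you need a bridge from $I^{\otimes 2d}$ down to untwisted coefficients. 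The paper constructs it by observing that $I\wedge I\subset I\otimes I$ is a trivial $\Z[G\times G]$-submodule isomorphic to $\Z$, that graded-commutativity of the cup product forces $\vv_X^2=i_*(w)$ for some $w\in H^2(X\times X;\Z)$ with $6w=0$, and hence $\vv_X^{2d}=i_*^{\otimes d}(w^d)$. Writing $w=a\times 1+1\times b$ and taking $d=2n$, the only surviving K\"unneth term is $\binom{2n}{n}\,a^n\times b^n$; killing it requires $6\mid\binom{2n}{n}$, not merely $3\mid\binom{2n}{n}$, since $a$ and $b$ may carry $2$-torsion. The paper handles this by noting $\binom{2n}{n}$ is always even; your argument never addresses the $2$-part of the binomial coefficient.

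\textbf{Part (ii).} You correctly anticipate that ordinary mod-$3$ zero-divisor cup length gives only $\tc(X)\ge 2n+1$ and propose a twisted-coefficient product of $4n$ zero-divisors (in effect, proving $\vv_X^{4n}\ne 0$). That would suffice via the zero-divisor bound, but you leave the non-vanishing as an ``expected main obstacle,'' and it is indeed a delicate twisted chain-level computation. The paper sidesteps it entirely with the weight technique of Farber--Grant: $\bar y=y\times 1-1\times y\in H^2(X\times X;\Z_3)$ equals the Bockstein $\beta(\bar x)$, hence has weight $\ge 2$ with respect to the path fibration, so $\bar y^{\,2n}$ has weight $\ge 4n$, and the computation $\bar y^{\,2n}=(-1)^n\binom{2n}{n}\,y^n\times y^n\ne 0$ in ordinary mod-$3$ cohomology already yields $\tc(X)\ge 4n+1$. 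So your construction of $X$ is right, but the lower-bound argument remains unfinished; the published proof replaces your hard twisted computation with an easy mod-$3$ one made strong enough by the weight-$2$ property of $\bar y$.
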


There are examples when inequality (\ref{ineq3}) of statement (i) is sharp. In paper \cite{FG} it is shown that for the lens space $X=L_3^{2n+1}$ one has
$\tc(X)=2\dim(X)$ for any $n$ having only digits $0$ and $1$ in its 3-adic expansion. Here $L^{2n+1}_3$ is the factor space $S^{2n+1}/\Z_3$ where $\Z_3=\{1, \omega, \omega^2\}$ is the group of 3-roots of unity, $\omega=e^{2\pi i/3}$.

Next we mention the following result applicable to topological spaces $X$ with $\pi_1(X)$ of cohomological dimension $\le 2$:

\begin{theorem}\label{thm3}
Let $X$ be a finite cell complex such that the cohomological dimension of its fundamental group does not exceed $2$, ${\rm {cd}}(\pi_1(X)) \le 2$. Then one has
\begin{eqnarray}\label{tcdran}
\quad\quad  \tc(X) \le \left\{
\begin{array}{ll} \dim(X) +2\cd(\pi_1(X)),&\mbox{if\  $\dim(X)$ is odd},\\ \\
 \dim(X) +2\cd(\pi_1(X))+1, &\mbox{if \ $\dim(X)$ is even.}
 \end{array}\right.
\end{eqnarray}
\end{theorem}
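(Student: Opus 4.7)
Write $\pi = \pi_1(X)$, $d = \cd(\pi) \le 2$, and $n = \dim X$. Let $\tilde X$ denote the simply connected universal cover of $X$ (a CW complex of dimension at most $n$), and let $K = K(\pi,1)$ with classifying map $f\colon X \to K$. The plan is to route the bound on $\tc(X)$ through the Lusternik--Schnirelmann category of $X$ via the universal inequality $\tc(X) \le 2\cat(X) - 1$.

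The principal step is the additive category estimate
\[
\cat(X) \le \cat(K(\pi,1)) + \cat(\tilde X) - 1
\]
for the Borel-type fibration $\tilde X \to X \to K$. Combined with the Eilenberg--Ganea formula $\cat(K(\pi,1)) = \cd(\pi) + 1 = d+1$ (valid whenever $\cd(\pi)$ is finite) and Ganea's dimensional bound $\cat(\tilde X) \le \lfloor n/2 \rfloor + 1$ for the simply connected $n$-dimensional universal cover, this gives $\cat(X) \le d + \lfloor n/2 \rfloor + 1$. Substituting into $\tc(X) \le 2\cat(X) - 1$ yields
\[
\tc(X) \le 2d + 2\lfloor n/2 \rfloor + 1 = \begin{cases} n + 2d, & n \text{ odd,}\\ n + 2d + 1, & n \text{ even,}\end{cases}
\]
since $2\lfloor n/2 \rfloor$ equals $n-1$ for odd $n$ and $n$ for even $n$. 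This is precisely the bound of the theorem, so the parity distinction in the conclusion arises automatically from the floor function.

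The main obstacle is justifying the additive $\cat$-estimate. Varadarajan's classical multiplicative inequality $\cat(E) \le \cat(B)\cat(F)$ applied to $\tilde X \to X \to K$ would only deliver the quadratic bound $\cat(X) \le (d+1)(\lfloor n/2 \rfloor + 1)$, producing a useless $\tc$-estimate. The additive refinement is obtained by a Dranishnikov-type anti-diagonal construction: starting with a categorical open cover $\{U_0, \dots, U_d\}$ of $K$ and a categorical open cover $\{\tilde W_0, \dots, \tilde W_{\lfloor n/2 \rfloor}\}$ of $\tilde X$, one forms candidate categorical opens of $X$ by pulling back each $U_i$ under $f$ (the preimage $f^{-1}(U_i)$ is $\pi_1$-null in $X$ and hence lifts to $\tilde X$) and then intersecting each lift with a $\tilde W_j$; pieces indexed by $(i,j)$ are assembled into a single open $A_s$ according to the total index $s = i+j$, the free $\pi$-action on $\tilde X$ being used to arrange pairwise disjointness of the distinct lifted pieces contributing to the same $A_s$. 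This produces a categorical open cover of $X$ by $d + \lfloor n/2 \rfloor + 1$ sets, which is the required bound on $\cat(X)$. The hypothesis $\cd(\pi) \le 2$ keeps these combinatorics tractable and guarantees that $K(\pi,1)$ admits a low-dimensional model, which is where the argument acquires its arithmetic sharpness.
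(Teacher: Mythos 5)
Your overall route---reduce to a bound on $\cat(X)$ and then apply $\tc(X) \le 2\cat(X) - 1$---is exactly the route the paper takes, and your arithmetic is correct: since $\lfloor n/2\rfloor = \lceil(n-1)/2\rceil$ for every integer $n$, the bound $\cat(X) \le \cd(\pi_1(X)) + \lfloor n/2\rfloor + 1$ coincides with Dranishnikov's bound $\cat(X) \le \lceil(\dim(X)-1)/2\rceil + \cd(\pi_1(X)) + 1$ quoted in the paper, and doubling-minus-one gives exactly the stated parity split. However, the paper simply cites this $\cat$-bound as the main theorem of Dranishnikov \cite{Dran}, whereas you attempt to re-derive it, and the re-derivation has a gap.

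The additive estimate $\cat(X) \le \cat(K(\pi,1)) + \cat(\tilde X) - 1$ is not a known general theorem about fibrations (only Varadarajan's multiplicative bound is), and your anti-diagonal sketch does not establish it. The sets $V_{ij} = \sigma_i^{-1}(\tilde W_j)$ are open subsets of $X$, and there is no reason why the sets with a common total index $i+j=s$ should be, or could be arranged to be, pairwise disjoint: the free $\pi$-action lives on $\tilde X$, not on $X$, so changing a lift $\sigma_i$ by a deck transformation merely changes which subset $V_{ij}$ of $X$ you take without separating $V_{ij}$ from $V_{i'j'}$ inside $X$, and a union of overlapping categorical opens need not be categorical. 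Moreover, your sketch never uses $\cd(\pi_1(X)) \le 2$ in any substantive step (and the parenthetical claim that $\cd = 2$ guarantees a low-dimensional $K(\pi,1)$ is precisely the open Eilenberg--Ganea problem, not a known fact). If the additive $\cat$-inequality held for all $\pi$ with $\cd(\pi)<\infty$, it would settle Dranishnikov's conjecture in full generality; the restriction to $\cd \le 2$ is exactly what Dranishnikov's actual proof, a delicate cohomological argument built around the Berstein--Schwarz class, requires. Cite \cite{Dran} as a black box for the $\cat$-bound, as the paper does, rather than attempting to re-prove it via a general fibration estimate.
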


\begin{proof}
This follows from a recent theorem of Dranishnikov \cite{Dran} who studied related questions concerning Lusternik - Schnirelmann category. The theorem of Dranishnikov \cite{Dran} states that for a cell complex $X$ with fundamental group $\pi_1(X)$ of cohomological dimension not exceeding 2 one has
\begin{eqnarray}\label{dran}
\cat(X) \le \lceil(\dim(X) -1)/{2}\rceil+\cd(\pi_1(X))+1.
\end{eqnarray}
 Inequality (\ref{tcdran}) follows from (\ref{dran}) and from the inequality $$\tc(X) \le 2\cdot \cat(X)-1,$$ see \cite{F1}.
\end{proof}

A conjecture, in the spirit the conjecture concerning $\cat(X)$ made by A. Dranishnikov in \cite{Dran}, states:
{\it There exists a function $F: \N \to\N$ such that for any cell-complex $X$ one has}
$$\tc(X) \le \dim(X) + F( \cd(\pi_1(X))).$$

Finally let us mention the following example. Let $X$ be the $d$-dimensional skeleton of a $\mu$-dimen\-sio\-nal torus $T^\mu$, i.e. $X=(T^\mu)^{(d)}$.
Theorem 5.1 of Cohen and Pruidze \cite{CoPr} gives
 that
$\tc(X)=2d+1$, assuming that $\mu\ge 2d\ge 4$. Hence in general an additional assumption that the fundamental group $\pi_1(X)$ is free abelian cannot help to improve the dimensional upper bound (\ref{upperbound}).

\section{Necessary and sufficient condition for $\tc(X)\le 2\dim (X)$}

Given a  connected cell complex $X$, we define below a local coefficient system $I$ over $X\times X$ and a canonical cohomology class
\begin{eqnarray}\mathfrak v = \vv_X \, \in\,  H^1(X\times X;I).\end{eqnarray}

Denote by $G=\pi_1(X, x_0)$ the fundamental group of $X$ and by $I=\ker (\epsilon ) \, \subset \Z[G]$ the kernel of the augmentation homomorphism
$\epsilon: \Z[G]\to \Z$. An element of $I$ is a finite sum of the form $\sum n_ig_i$ where $n_i\in \Z$, $g_i\in G$, $\sum n_i=0$.
 One can view $I$ and $\Z[G]$ as left $\Z[G\times G]$-modules (i.e. as $\Z[G]$-bimodules) via the action
 \begin{eqnarray}\label{monod}(g, h)\cdot \sum n_ig_i = \sum n_i (gg_ih^{-1}), \quad g,h \in G.\end{eqnarray} According to standard conventions
 (see \cite{Wh}, chap. 6), the left $\Z[G\times G]$-modules $I$ and $\Z[G]$ determine local coefficient systems (denoted by $I$ and $\Z[G]$ correspondingly) over $X\times X$.

Consider a map $f: G\times G\to I$ given by \begin{eqnarray}\label{formula}
f(g,h) =gh^{-1}-1, \quad g,h \in G.\end{eqnarray} It is a crossed homomorphism, i.e. it satisfies the identity
$$f((g,h)(g',h'))=f(g,h) +(g,h)f(g'h')$$ where $g, g', h, h'\in G$. By Theorem 3.3 from chapter 6 of \cite{Wh}, $f$ determines a one-dimensional cohomology class $\vv\in H^1(X\times X;I)$.

\begin{lemma}\label{lm0} The restriction of the class $\vv=\vv_X$ to the diagonal $X\subset X\times X$
vanishes, i.e.
\begin{eqnarray}\label{restriction}\vv_X|X=0\in H^1(X;I|X).\end{eqnarray}
\end{lemma}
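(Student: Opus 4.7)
The plan is to exploit the explicit description of $\vv_X$ provided by the crossed homomorphism (\ref{formula}) together with the naturality of Whitehead's correspondence between $H^1$ with local coefficients and equivalence classes of crossed homomorphisms.

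First I would identify the map induced on fundamental groups by the diagonal inclusion $\Delta: X \hookrightarrow X \times X$; it is the diagonal embedding $\delta: G \to G \times G$, $g \mapsto (g,g)$. Pulling the action (\ref{monod}) back along $\delta$ yields the $\Z[G]$-module structure on $I$ given by conjugation, $g \cdot x = g x g^{-1}$, and this is precisely the local coefficient system denoted $I|X$ in the statement. I would write this identification out explicitly, since it is the only point where the two \emph{a priori} different descriptions of $I|X$ have to be reconciled.

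Next I would invoke the naturality of the Whitehead correspondence (Theorem 3.3 of Chapter 6 of \cite{Wh}): if $\vv_X \in H^1(X \times X; I)$ is represented by the crossed homomorphism $f: G \times G \to I$, then the restriction $\vv_X|X \in H^1(X; I|X)$ is represented by the composite crossed homomorphism $f \circ \delta : G \to I$, where $I$ now carries the conjugation action identified in the previous step.

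Finally, a one-line computation using (\ref{formula}) gives, for every $g \in G$,
$$(f \circ \delta)(g) \;=\; f(g,g) \;=\; g g^{-1} - 1 \;=\; 0,$$
so $f \circ \delta$ is identically zero as a crossed homomorphism and therefore represents the trivial cohomology class, establishing (\ref{restriction}). There is no genuine obstacle in the argument; the only part requiring care is the bookkeeping in the first step, namely verifying that the pullback of the bimodule action (\ref{monod}) along the diagonal coincides with the conjugation action on $I$ that defines $I|X$.
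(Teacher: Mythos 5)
Your argument is correct and is essentially the paper's proof, just spelled out in more detail: the paper likewise observes that the crossed homomorphism $f$ of (\ref{formula}) restricts trivially along the diagonal since $f(g,g)=gg^{-1}-1=0$, and concludes via the Whitehead correspondence. The extra care you take in identifying the pulled-back coefficient system $I|X$ with the conjugation action is a reasonable elaboration of a step the paper leaves implicit.
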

\begin{proof}
Indeed, the crossed homomorphism induced by $f$ on the diagonal $G\subset G\times G$ is trivial, $f(g,g)=0$ for all $g\in G$ as follows from (\ref{formula}). \end{proof}

Note that the local system $I|X$ corresponds to the augmentation ideal $I$ viewed with the following left $G$-action $$g\cdot \sum n_ig_i=\sum n_i\cdot(gg_ig^{-1}),$$ where $g,g_i\in G$ and $\sum n_i=0$.

Here is another property of the class $\vv=\vv_X$ which is used later in this paper:
\begin{lemma}\label{lm1} One has $$\vv_X=\beta (1)\in H^1(X\times X;I)$$ where
$$\beta: H^0(X\times X;\Z) \to H^1(X\times X;I)$$
is Bockstein homomorphism corresponding to the exact sequence of left $\Z[G\times G]$-modules
$$0\to I \to \Z[G]\stackrel\epsilon \to \Z\to 0.$$
\end{lemma}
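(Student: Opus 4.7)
The plan is to verify $\vv_X = \beta(1)$ by identifying both sides as crossed homomorphisms $G \times G \to I$ modulo principal ones. Recall that for any cell complex $Y$ and any local system $M$ on $Y$, the group $H^1(Y; M)$ is canonically isomorphic to $H^1(\pi_1(Y); M)$, which is in turn the group of crossed homomorphisms modulo principal ones (see \cite{Wh}, Chap.~6). Under this identification the cohomology class $\vv_X$ is, by construction, represented by the crossed homomorphism $f(g,h) = gh^{-1} - 1$ from (\ref{formula}). So it suffices to show that $\beta(1)$ is represented by the same crossed homomorphism.

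To compute $\beta(1)$, I would use the standard cochain-level description of the connecting homomorphism for a short exact sequence of coefficient modules $0 \to I \to \Z[G] \xrightarrow{\epsilon} \Z \to 0$. Observe first that the $\Z[G\times G]$-action (\ref{monod}) descends to the trivial action on $\Z = \Z[G]/I$, so $H^0(X\times X;\Z) = \Z$ and the generator $1$ is represented by the constant $0$-cochain equal to $1 \in \Z$. Lift this $0$-cochain to a $0$-cochain in $\Z[G]$ by choosing the constant value $1 \in \Z[G]$, and apply the coboundary. In the language of group cohomology of $\Gamma = G \times G$, the coboundary of a $0$-cochain $m \in \Z[G]$ is the $1$-cocycle $\gamma \mapsto \gamma \cdot m - m$. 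With $m = 1$ and $\gamma = (g,h)$, formula (\ref{monod}) gives
\[
(g,h)\cdot 1 - 1 \;=\; gh^{-1} - 1 \;=\; f(g,h).
\]
Because $\epsilon(gh^{-1}-1)=0$, this cocycle indeed takes values in $I$, and by definition of the Bockstein it represents $\beta(1) \in H^1(X \times X; I)$. Comparing with the definition of $\vv_X$ yields the equality $\vv_X = \beta(1)$.

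The only real point requiring care is the compatibility between the topological Bockstein on $H^*(X\times X;-)$ and the algebraic Bockstein on group cohomology $H^*(G\times G;-)$ under the identification in $H^1$; this compatibility is a standard naturality statement for the classifying map $X \times X \to K(G \times G, 1)$ combined with the functoriality of connecting homomorphisms for short exact sequences of local coefficient systems. Once that is invoked, the computation above is a direct application of the bar-resolution formula for the coboundary, and no further work is needed.
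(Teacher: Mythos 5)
Your argument is correct and is essentially the same computation as in the paper, just phrased one level more abstractly: you invoke the identification of $H^1(X\times X;I)$ with crossed-homomorphism cohomology of $G\times G$ and compute the algebraic Bockstein via the bar resolution, while the paper makes this explicit by working with the $\Z[G\times G]$-equivariant cochain $k$ on $S_0(\tilde X\times\tilde X)$ with $k(g\tilde x_0,h\tilde x_0)=gh^{-1}$, thereby carrying out the group-cohomology computation directly on the singular chains of the universal cover. Both routes boil down to lifting the constant $0$-cochain $1\in\Z$ to $1\in\Z[G]$ and applying the coboundary, which produces the crossed homomorphism $(g,h)\mapsto (g,h)\cdot 1-1=gh^{-1}-1$ representing $\vv_X$.
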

\begin{proof} Let $\tilde X$ denote the universal cover of $X$ and let $\tilde x_0\in \tilde X$ be a lift of the base point $x_0\in X$.
Consider the singular chain complex $S_\ast=S_\ast(\tilde X\times \tilde X)$; we identify $S_0$ with the free abelian group generated by the points of
$\tilde X \times \tilde X$. Recall that $S_\ast$ has a structure of a free left $\Z[G\times G]$-module. Consider a $\Z[G\times G]$-homomorphism $k: S_0(\tilde X\times \tilde X)\to \Z[G]$ associating an element of $G$ with every point of $\tilde X\times \tilde X$ and such that $k(\tilde x_0, \tilde x_0)=1\in G$; then $k(g\tilde x_0, h\tilde x_0)=gh^{-1}$.
The cochain $\epsilon\circ k: S_0\to \Z$ represents the class $1\in H^0(X\times X;\Z)$ and the Bockstein image $\beta(1)\in H^1(X\times X;I)$ is represented by the composition
$$\delta(k): \, S_1\stackrel\partial \to S_0\stackrel k\to \Z[G],$$
taking values in $I$. A crossed homomorphism $f': G\times G\to I$ associated to $\beta(1)$ can be found as follows, see
\cite{Wh}, chapter 6, \S 3. Given a pair $(g,h)\in G\times G=\pi_1(X\times X, (x_0,x_0))$, realize it by a loop $\sigma: ([0,1], \partial [0,1]) \to (X\times X, (x_0,x_0))$, then lift $\sigma$ to the covering
$\tilde \sigma: ([0,1], 0) \to (\tilde X\times \tilde X, (\tilde x_0,\tilde x_0))$ and finally apply the cocycle $\delta(k)$ to $\tilde \sigma$, viewed as a singular 1-simplex in $\tilde X\times \tilde X$. We obtain $f'(g,h)=k(g\tilde x_0, h\tilde x_0)- k(\tilde x_0, \tilde x_0) =gh^{-1}-1$ for all $g,h\in G$. This coincides with the crossed homomorphism describing $\vv_X$, see (\ref{formula}).
Hence $\beta(1)=\vv_X$.
\end{proof}
\begin{corollary}\label{cor1}
The order of the class $\vv_X\in H^1(X\times X;I)$ equals the cardinality of the fundamental group $|G|$ of $X$. In particular $\vv_X=0$ if and only if $X$ is simply connected.
\end{corollary}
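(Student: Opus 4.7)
My plan is to deduce the corollary directly from Lemma~\ref{lm1} by running the long exact cohomology sequence associated with the short exact sequence of $\Z[G\times G]$-modules
\begin{equation*}
0 \to I \to \Z[G] \xrightarrow{\epsilon} \Z \to 0.
\end{equation*}
Since the cohomology class $\vv_X$ equals $\beta(1)$, its order is the order of the image of $1 \in H^0(X\times X;\Z)=\Z$ in $H^1(X\times X;I)$, which by exactness is the order of $1$ in the cokernel of $\epsilon_\ast : H^0(X\times X;\Z[G]) \to H^0(X\times X;\Z)$. So the task reduces to computing this image.

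The key computation is to identify $H^0(X\times X;\Z[G])$. Since $X\times X$ is connected with fundamental group $G\times G$, zeroth cohomology with local coefficients is the module of invariants: $H^0(X\times X;\Z[G]) = \Z[G]^{G\times G}$, with respect to the action $(g,h)\cdot\sum n_i g_i = \sum n_i g g_i h^{-1}$ from (\ref{monod}). Given any two group elements $g_i, g_j$, the pair $(g_j g_i^{-1},1)$ moves $g_i$ to $g_j$, so the $G\times G$-orbit of any element is all of $G$; hence an invariant chain must have constant coefficients on $G$. When $G$ is finite this yields $\Z[G]^{G\times G}=\Z\cdot N$ where $N=\sum_{g\in G} g$ is the norm element, and when $G$ is infinite it yields $0$.

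Applying $\epsilon$ gives $\epsilon(N)=|G|$, so the image of $\epsilon_\ast$ inside $H^0(X\times X;\Z)=\Z$ is $|G|\,\Z$ (interpreted as the zero subgroup when $G$ is infinite). Thus $\beta(1)=\vv_X$ has order exactly $|G|$ in $H^1(X\times X;I)$, finite or infinite as the case may be. For the second assertion, $\vv_X = 0$ is equivalent to $|G|=1$, i.e.\ to $X$ being simply connected; in the simply connected direction one can also observe directly that $I=0$ so $H^1(X\times X;I)=0$.

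The calculations are routine; the only point requiring care is the identification of the $G\times G$-invariants of $\Z[G]$ under the bimodule action, which is where the order $|G|$ enters the picture, and this is the main step of the argument.
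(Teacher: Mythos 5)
Your proof is correct and follows essentially the same route as the paper: use Lemma~\ref{lm1} to identify $\vv_X$ with $\beta(1)$, invoke the long exact sequence of $0\to I\to \Z[G]\to\Z\to 0$, and compute $H^0(X\times X;\Z[G])=\Z[G]^{G\times G}$ (which is $\Z\cdot N$ when $G$ is finite and $0$ when $G$ is infinite) to read off the image of $\epsilon_\ast$ and hence the order of $\beta(1)$. Your phrasing via the cokernel of $\epsilon_\ast$ is slightly leaner in that it does not require the paper's observation that $H^0(X\times X;I)=0$, but the substance is the same.
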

\begin{proof} Consider the exact sequence
$$H^0(X\times X;I) \to H^0(X\times X;\Z[G]) \stackrel \epsilon \to H^0(X\times X;\Z) \stackrel \beta\to H^1(X\times X;I).$$
Note that $H^0(X\times X;\Z[G])$ is isomorphic to the set of elements $a=\sum n_i g_i\in \Z[G]$ which are invariant with respect to
$G\times G$-action, see \cite{Wh}, chapter 6, Theorem 3.2.
Consider first the case when $G$ is infinite. Then $H^0(X\times X;\Z[G])=0$ as there are no invariant elements in the group ring. Since $H^0(X\times X;\Z)=\Z$ this implies that in this case the class $\vv_X\in H^1(X\times X;I)$ generates an infinite cyclic subgroup.

If $G$ is finite then $H^0(X\times X;I)=0$ (as above) and any $G\times G$-invariant element of $\Z[G]$ is a multiple of $N=\sum_{g\in G} g$.
Hence the group $H^0(X\times X;\Z[G])$ is infinite cyclic generated by $N$ and since
$\epsilon(N) =|G|$,
the exact sequence
$$0\to H^0(X\times X;\Z[G]) \stackrel \epsilon \to H^0(X\times X;\Z) \stackrel \beta\to H^1(X\times X;I)$$
turns into $$0\to \Z \stackrel {|G|}\to \Z \stackrel \beta\to H^1(X\times X;I).$$
This shows that the subgroup of $H^1(X\times X;I)$ generated by the class $\vv_X$ is cyclic of order $|G|$.
\end{proof}

The following result explains the key role the cohomology class $\vv=\vv_X$ plays in the theory of topological complexity.

\begin{theorem}\label{thm4} Let $X$ be a cell complex of dimension $n=\dim (X)\ge 2$.
One has \begin{eqnarray}\tc(X)\le 2n\end{eqnarray} if and only if the $2n$-th power $$\vv^{2n}\, =0 \, \in \, H^{2n}(X\times X; I^{2n})$$ vanishes.
Here $I^{2n}=I\otimes_\Z I\otimes_\Z \dots\otimes_\Z I$ denotes the tensor product over $\Z$ of $2n$ copies of $I$, viewed with the diagonal action of $G\times G$, and $\vv^{2n}$ is the cup-product
$\vv\cup \vv\cup \dots \cup \vv$ of $2n$ copies of $\vv$.
\end{theorem}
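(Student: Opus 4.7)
\medskip
\noindent\emph{Proof proposal.}
The plan is to prove both directions of the equivalence by combining the Schwarz--Svarc characterization of $\tc(X)$ as the sectional category of $p\colon PX\to X\times X$ with obstruction theory applied to the iterated fiberwise join $p^{*k}\colon (PX)^{*_{X\times X}k}\to X\times X$; recall that $\tc(X)\le k$ if and only if $p^{*k}$ admits a global section.

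The necessity direction ($\tc(X)\le 2n\Rightarrow \vv_X^{2n}=0$) is the easy one. By Lemma \ref{lm0} the restriction $\Delta^*(\vv_X)$ to the diagonal vanishes, so $\vv_X$ is a zero-divisor. Applying the cohomological lower bound for sectional category with local coefficients to $2n$ copies of $\vv_X$ gives: if $\vv_X^{2n}\neq 0\in H^{2n}(X\times X;I^{2n})$, then $\tc(X)\ge 2n+1$. The contrapositive is what we want.

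For the sufficiency direction ($\vv_X^{2n}=0\Rightarrow \tc(X)\le 2n$), I would construct a section of $p^{*2n}$ by obstruction theory on the $2n$-dimensional CW-complex $X\times X$. Two standard calculations underpin the argument. First, the fiber $F=(\Omega X)^{*2n}$ is $(2n-2)$-connected, since the $k$-fold join of nonempty spaces is $(k-2)$-connected. Second, Hurewicz together with the iterated formula $\tilde H_*(A*B)\cong \tilde H_{*-1}(A\wedge B)$ give an isomorphism of $(G\times G)$-modules
\[
\pi_{2n-1}(F)\;\cong\;\tilde H_0(\Omega X)^{\otimes 2n}\;=\;I^{2n}.
\]
Every obstruction in degree strictly below $2n$ vanishes because $F$ is highly connected, and every obstruction in degree strictly above $2n$ vanishes because $\dim(X\times X)=2n$. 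Therefore the only potentially nonzero obstruction to a global section of $p^{*2n}$ is the primary one, an element of $H^{2n}(X\times X;I^{2n})$.

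The crux is to identify this primary obstruction with $\vv_X^{2n}$. My plan is to carry this out in two stages. First, for the original fibration $p$, the basic obstruction class (a priori living in non-abelian $H^1(X\times X;\pi_0(\Omega X))$) coincides, after linearizing the pointed $(G\times G)$-set $\pi_0(\Omega X)=G$ into its reduced homology $\tilde H_0(\Omega X)=I$, with the crossed homomorphism $(g,h)\mapsto gh^{-1}-1$ from (\ref{formula}); that is, it is precisely $\vv_X$. This agrees with the Bockstein description of $\vv_X$ given in Lemma \ref{lm1}. Second, the primary obstruction of the $k$-fold fiberwise join $p^{*k}$ equals the $k$-fold cup product $\vv_X^k$. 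This second step is the main obstacle: it requires a cocycle-level computation showing that the Künneth/cross-product maps arising from the fiberwise joins match the cup product in cohomology with twisted coefficients. Granted both steps, the hypothesis $\vv_X^{2n}=0$ forces the primary obstruction to vanish, the section exists, and $\tc(X)\le 2n$.
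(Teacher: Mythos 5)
Your sufficiency argument follows the paper's proof in outline: use Schwarz's Theorem~3 to recast $\tc(X)\le 2n$ as the existence of a section of the $2n$-fold fiberwise join $p_{2n}$, observe that its fiber $(\Omega X)^{*2n}$ is $(2n-2)$-connected so that on the $2n$-dimensional complex $X\times X$ the primary obstruction is the only one, and then identify that obstruction with $\vv_X^{2n}$. Two points deserve attention.

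The step you call ``the main obstacle''---that the primary obstruction of the $k$-fold fiberwise join is the $k$-fold cup power of the primary obstruction of $p$, with coefficients in the $k$-fold tensor power of the coefficient system---is not something to rederive by hand: it is exactly Theorem~1 of Schwarz \cite{Schw}, which the paper invokes directly. What genuinely remains after that citation is the identification of the local system $\mathcal L=\{\tilde H_0(p^{-1}(\cdot))\}$ with $I$ as a $\Z[G\times G]$-module, together with $\theta=\vv_X$. Your appeal to Lemma~\ref{lm1} does not accomplish this: that lemma expresses $\vv_X$ as a Bockstein image but says nothing about the obstruction cochain of the path-space fibration. The paper instead computes the monodromy explicitly (transport along a loop $(\alpha,\beta)$ in $X\times X$ carries $\omega\in\Omega X$ to $\alpha\omega\bar\beta$, which on $\tilde H_0(\Omega X)=I$ gives the action $(g,h)\cdot u=guh^{-1}$), and then checks, using the constant-loop section over the $0$-skeleton, that the crossed homomorphism attached to $\theta$ is $(g,h)\mapsto gh^{-1}-1$, matching (\ref{formula}). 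This cocycle-level computation is part of the proof, not an optional cross-check.

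Finally, your split into two directions is correct but superfluous: once $\theta_{2n}=\vv_X^{2n}$ is established, the obstruction-theoretic statement gives both implications simultaneously, since $p_{2n}$ admits a section if and only if its sole obstruction vanishes. Your zero-divisor cup-length argument for the necessity direction (via Lemma~\ref{lm0} and the local-coefficients version of the sectional-category lower bound) is a valid independent proof of that half, but it saves no work once the identification is in place.
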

\begin{proof}
Consider the path space fibration (\ref{fibration}). The topological complexity $\tc(X)$ is defined as the Schwarz genus of this fibration. Consider also the $2n$-fold fiberwise join  $p_{2n}: P_{2n}X \to X\times X$ of fibration (\ref{fibration}); this construction is described in detail in \cite{Schw}, chapter 2, \S 1.
According to Theorem 3 of Schwarz \cite{Schw}, one has $\tc(X)\le 2n$ if and only the fibration $p_{2n}$ has a continuous section. The fibre $F_{2n}$ of $p_{2n}$ is the $2n$-fold join $\Omega X \ast \Omega X\ast \dots \ast \Omega X$, where $\Omega X$ is the space of based loops in $X$. It follows that the fibre $F_{2n}$ is $(2n-2)$-connected and hence the primary obstruction
\begin{eqnarray*}
\theta_{2n} \in H^{2n}(X\times X; \mathcal L^{2n}),  \quad \mathcal L^{2n} =\{ \pi_{2n-1}(F_{2n})\}=\{ H_{2n-1}(F_{2n})\},\end{eqnarray*}  is the only obstruction to the existence of a continuous section of $p_{2n}$:
one has $\tc(X)\le 2n$ if and only if $\theta_{2n}=0$.
Note that  the fibre $F_{2n}$ is 2-connected since $n\ge 2$.
The symbol
$\mathcal L^{2n} = \{ H_{2n-1}(F_{2n})\}$ denotes a local system of homology groups of fibres which associates with any point $(x, y)\in X\times X$ of the base the abelian group $$\mathcal L^{2n}_{(x,y)}=H_{2n-1}(p_{2n}^{-1}(x, y))$$ and with any path $\sigma: [0,1]\to X\times X$ an isomorphism
\begin{eqnarray}\label{iso}
\sigma_\ast: \mathcal L^{2n}_{\sigma(1)} \to \mathcal L^{2n}_{\sigma(0)}\end{eqnarray} defined as follows. Given $\sigma$ one applies the Homotopy Lifting Property to find a map
\begin{eqnarray}\label{lift}K: p_{2n}^{-1}(\sigma(1))\times [0,1] \to P_{2n}X\end{eqnarray}
satisfying $K(a,1)=a$ and $p_{2n}(K(a,t))=\sigma(t)$ for all $a\in p_{2n}^{-1}(\sigma(1))$ and $t\in [0,1]$. Then $a\mapsto K(a,0)$ is a map
$p_{2n}^{-1}(\sigma(1)) \to p_{2n}^{-1}(\sigma(0))$ and (\ref{iso}) is the induced map on homology.

By Theorem 1 from \cite{Schw} the local system $\mathcal L^{2n}$ is the tensor power of $2n$ copies of a local system $\mathcal L$,
$$\mathcal L^{2n} = \mathcal L \otimes \mathcal L \otimes \dots \otimes \mathcal L, \quad  \, \mbox{($2n$ times)}$$ and the obstruction $\theta_{2n}$ is
$2n$-fold cup-product
$$\theta_{2n}=\theta\cup \theta \cup \dots\cup \theta\quad \, \mbox{($2n$ times)}, \quad \theta\in H^1(X\times X;\mathcal L).$$
Here $\mathcal L$ is the local system $\mathcal L_{(x,y)}=\tilde H_0(p^{-1}(x,y))$ of reduced zero dimensional homology groups of fibres of the initial fibration (\ref{fibration}) and $\theta\in H^1(X\times X;\mathcal L)$ is \lq\lq the homological obstruction\rq\rq\, to the existence of a continuous section of (\ref{fibration}) over the 1-skeleton of $X\times X$. Theorem \ref{thm4} follows once we are able to identify the local systems $\mathcal L$ and $I$ so that $\theta =\vv \in H^1(X\times X;I)$.

Let $x_0\in X$ the base point. The fiber $p^{-1}(x_0, x_0)$ is the space $\Omega X$ of all loops in $X$ based at $x_0$. Path-connected components of the fibre are in one-to-one correspondence with elements of of the fundamental group $\pi_1(X, x_0)=G$. We see that $H_0(p^{-1}(x_0, x_0))=\Z[G]$ and $\mathcal L_{(x_0,x_0)}= \tilde H_0(p^{-1}(x_0, x_0))=I= \ker(\epsilon)$. Next we show that the monodromy on $I$ acts according to (\ref{monod}). Given a path $\sigma: [0,1]\to X\times X$, where $\sigma(t)=(\alpha(t), \beta(t))$,
with $\sigma(0)=\sigma(1)=(x_0,x_0)$, we may define a homotopy $$K_\sigma: p^{-1}(x_0, x_0)\times [0,1] \to PX$$ similar to (\ref{lift}) by the formula:
$$K_\sigma (\omega, \tau)(t) = \left\{ \begin{array}{ll}
\alpha(3t+\tau), & \mbox{for}\,\, 0\le t\le \frac{1 -\tau}{3},\\ \\
\omega(\frac{3t+\tau -1}{1+2\tau}), & \mbox{for}\, \, \frac{1 -\tau}{3}\le t \le \frac{2+\tau}{3},\\ \\
\beta(-3t+\tau +3), & \mbox{for}\, \, \frac{2+\tau}{3}\le t\le 1,
\end{array}\right.
$$
where $\omega \in p^{-1}(x_0, x_0)=\Omega X$ and $t, \tau\in [0,1]$. One has $K(\omega, 1)=\omega$ and $p(K(\omega, \tau))=\sigma(\tau)$.
The monodromy action $\Omega X \to \Omega X$ along $\sigma$ is given by
\begin{eqnarray}\label{monod1}\omega\mapsto K_\sigma(\omega, 0)= \alpha \omega \bar{\beta};
 \end{eqnarray}
 here $\bar{\beta}$ denotes the inverse loop to $\beta$. We see that this map induces on $\tilde H_0(\Omega X)$ the monodromy action (\ref{monod}) and therefore $I$ and $\mathcal L$ coincide as local coefficient systems.

Finally we show that the homological obstruction $\theta\in H^1(X\times X;I)$ equals $\vv$. Without loss of generality we may assume that
$X$ has a single zero-dimensional cell $x_0$. Let $\omega_0$ be the constant loop at $x_0$; this defines a section over the 0-skeleton.
The homological obstruction associates with any oriented 1-cell of $X\times X$ the formal difference, in $\tilde H_0(\Omega X)=I$, between the connected components of $K_\sigma(\omega_0)$ and $\omega_0$ where $\sigma$ is a loop representing the cell.
For any oriented 1-cell $e$ of $X$ consider the corresponding oriented one-cells $e\times x_0$ and $x_0\times e$ of $X\times X$.
As follows from formula (\ref{monod1}) the crossed homomorphism $f': G\times G\to I$ corresponding to $\theta$ is given by
$$f'(g, 1) = g-1, \quad f'(1, h) = h^{-1}-1, \quad h\in G.$$
Hence, we see that $$f'(g,h)= f'((g,1)(1,h))= f'(g,1) +(g,1)f'(1,h)=gh^{-1}-1 =f(g,h).$$
Therefore $\theta=\vv$.
\end{proof}

\begin{corollary}
Let $X$ be a cell complex with $\tc(X)=2\dim(X)+1.$ Then the topological complexity of the Eilenberg - MacLane complex $Y=K(\pi_1(X),1)$ satisfies $$\tc(Y) \ge 2\dim(X)+1.$$\end{corollary}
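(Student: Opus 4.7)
Set $n=\dim(X)$ and $G=\pi_1(X)$. The plan is to invoke Theorem~\ref{thm4}. Since $\tc(X)=2n+1>2n$, Theorem~\ref{thm4} (applied to $X$, assuming $n\ge 2$; the case $n\le 1$ is trivial because then $X$ is homotopy equivalent to $Y$) forces
\[
\vv_X^{2n}\,\ne\,0\,\in\, H^{2n}(X\times X;\,I^{2n}).
\]
The strategy is to transfer this non-vanishing across the classifying map $\phi\colon X\to Y=K(G,1)$, and then apply the easy (``only if'') direction of Theorem~\ref{thm4} to $Y$.

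For the transfer, the key observation is that the cohomology class $\vv$ is intrinsically attached to the fundamental group. Indeed, $\phi$ induces an isomorphism on $\pi_1$, so $(\phi\times\phi)_{\ast}$ may be identified with $\mathrm{id}_{G\times G}$. Consequently the local coefficient system $I$ on $Y\times Y$ pulls back to the coefficient system $I$ on $X\times X$, since both are determined by the same $\Z[G\times G]$-module. Moreover the crossed homomorphism $f(g,h)=gh^{-1}-1$ used to define $\vv$ in \eqref{formula} depends only on $G$, so by the naturality of the correspondence between crossed homomorphisms and one-dimensional cohomology classes (Theorem~3.3 of chapter~6 of \cite{Wh}), we have $(\phi\times\phi)^{\ast}\vv_Y=\vv_X$. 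Taking $2n$-th powers yields
\[
(\phi\times\phi)^{\ast}\bigl(\vv_Y^{2n}\bigr)\,=\,\vv_X^{2n}\,\ne\,0,
\]
hence $\vv_Y^{2n}\ne 0$ in $H^{2n}(Y\times Y;I^{2n})$.

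Finally, although $Y=K(G,1)$ need not be finite-dimensional, the implication ``$\tc(Y)\le 2n \Rightarrow \vv_Y^{2n}=0$'' still holds by the obstruction-theoretic part of the argument in Theorem~\ref{thm4}: if $\tc(Y)\le 2n$ then, by Schwarz's theorem, the fiberwise join fibration $p_{2n}\colon P_{2n}Y\to Y\times Y$ admits a continuous section, so the primary obstruction $\theta_{2n}=\vv_Y^{2n}$ to such a section must vanish. Since $\vv_Y^{2n}\ne 0$, we conclude $\tc(Y)>2n$, i.e.\ $\tc(Y)\ge 2\dim(X)+1$.

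The only real subtlety is the naturality check $(\phi\times\phi)^{\ast}\vv_Y=\vv_X$: one must confirm that both sides are represented by the \emph{same} crossed homomorphism $f\colon G\times G\to I$, which is immediate from formula~\eqref{formula} because $\phi$ is a $\pi_1$-isomorphism. Once this is in place, the rest is a direct application of Theorem~\ref{thm4}.
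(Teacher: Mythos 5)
Your proof is correct and follows essentially the same route as the paper: use Theorem~\ref{thm4} to deduce $\vv_X^{2n}\ne 0$, pull this back along the classifying map to get $\vv_Y^{2n}\ne 0$, and conclude $\tc(Y)\ge 2n+1$. The only divergence is in the final step: the paper simply invokes Corollary~4.40 of \cite{invit} (the cup-product lower bound for zero-divisors, applicable since $\vv_Y$ restricts trivially to the diagonal by Lemma~\ref{lm0}), whereas you re-derive this implication by tracing through the obstruction-theoretic half of Theorem~\ref{thm4}'s proof, i.e.\ Schwarz's theorem plus the fact that a global section of $p_{2n}$ forces the primary obstruction $\theta_{2n}=\vv_Y^{2n}$ to vanish. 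Your version is correct and slightly more self-contained, and you rightly flag that only the easy direction of Theorem~\ref{thm4} is needed for $Y$ since $K(\pi_1(X),1)$ need not be finite-dimensional; you also correctly justify the pullback identity $(\phi\times\phi)^\ast\vv_Y=\vv_X$ via the crossed-homomorphism description (one could equally appeal to Lemma~\ref{lm1} and naturality of the Bockstein), which the paper states without argument.
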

\begin{proof} $X$ is aspherical if $\dim X=1$. Hence we may assume that $n=\dim(X)\ge 2$ and so Theorem \ref{thm4} is applicable.
Consider local systems $I_X$ on $X\times X$ and $I_Y$ on $Y\times Y$ and cohomology classes $\vv_X\in H^1(X\times X;I_X)$ and
$\vv_Y\in H^1(Y\times Y;I_Y)$ as described above. The canonical map $f: X\to Y$ inducing an isomorphism of fundamental groups satisfies $(f\times f)^\ast(I_Y)=I_X$ and $(f\times f)^\ast(\vv_Y)=\vv_X$. If $(\vv_X)^{2n}\not=0$ then $(\vv_Y)^{2n}\not=0$. Inequality $\tc(Y)\ge 2n+1$ now follows from \cite{invit}, Corollary 4.40 since $\vv_Y$ is a zero-divisor.
\end{proof}

\section{Proof of Theorem \ref{thm1}}

Let $X$ be a connected cell complex with $\pi_1(X)=\Z_2=G$. Clearly, $n=\dim(X)\ge 2$ and we may apply Theorem \ref{thm4}. The augmentation ideal
$I=\ker[\epsilon: \Z[G]\to \Z]$ is isomorphic to $\Z$ as an abelian group; however $I$  is nontrivial as a local system on $X\times X$. More precisely, each of the classes $(g,1), (1,g)\in G\times G$ (where $g\in G$ is the unique nontrivial element) acts as multiplication by $-1$ on $\Z=I$.
It follows that the tensor square $I\otimes_\Z I$ is the trivial coefficient system $\Z$.

Consider the canonical class $\vv=\vv_X\in H^1(X\times X;I)$ and its square $$\vv^2\in H^2(X\times X;\Z).$$ Since $H^1(X;\Z)=0$ the K\"unneth theorem gives
$$H^2(X\times X)=H^2(X)\otimes H^0(X) \oplus H^0(X)\otimes H^2(X)$$
where we dropped the coefficient group $\Z$ from the notation. Hence we may write
$$\vv^2=a\times 1 + 1\times b, \quad a, b \in H^2(X; \Z).$$
By Lemma \ref{lm0} one has $a+b=0$, and by Corollary \ref{cor1} both classes $a$ and $b$ are of order two: $2a=0=2b$. Hence we may write
$$\vv^2 =a\times 1+1\times a$$
and
$$\vv^{2n} =(\vv^2)^n =(a\times 1+1\times a)^n = \sum_{i=0}^n \binom{n}{i} a^i\times a^{n-i}.$$
If $n$ is odd then in the last sum either $a^i=0$ or $a^{n-i}=0$ for dimensional reasons. If $n$ is even then
$$\vv^{2n} = \binom{n}{n/2} a^{n/2}\times a^{n/2}=0$$
since the binomial coefficient $\binom{n}{n/2}$ is always even and $2a=0$. Theorem \ref{thm4} implies now that $\tc(X)\le 2n$.

To prove the second statement of the Theorem, assume that $X$ is a closed manifold satisfying $\pi_1(X)=\Z_2$ and $w^n=0$ where $w\in H^1(X;\Z_2)$ is the generator. By a Theorem of Berstein \cite{Bers} mentioned earlier one has $\cat(X)\le \dim(X)$. Our statement (\ref{upperbound2}) follows now from the inequality
$\tc(X)\le 2\cat(X)-1$, see \cite{F1}.
This completes the proof.

\section{Proof of Theorem \ref{thm2}}

Let $X$ be a cell complex such that the fundamental group $\pi_1(X, x_0)=G=\{1, t, t^2\}$ is cyclic of order $3$, i.e. $t^3=1$. The group ring $\Z[G]$
is the ring of polynomials of the form $a+bt+ct^2$ with the usual addition and multiplication and with the additional identity $t^3=1$; here the coefficients $a, b,c\in \Z$ are integers. The augmentation ideal $I$ has rank $2$; it
is generated over $\Z$ by two elements $\alpha=t-1$ and $\beta=t^2-t$. The structure of $I$ as a $\Z[G\times G]$-module is given by
$$(t,1)\cdot \alpha=\beta, \quad  (t,1)\cdot \beta= -\alpha - \beta,$$
and
$$(1,t)\cdot \alpha=- \alpha -\beta, \quad (1,t)\cdot \beta= \alpha.$$

Consider the canonical class $\vv_X\in H^1(X\times X; I)$ and its square $\vv_X^2\in H^2(X\times X; I\otimes I)$. The local system $I\otimes I$ has rank 4
and is generated by the elements $\alpha\otimes \alpha$, $\alpha\otimes \beta$, $\beta\otimes \alpha$ and $\beta\otimes \beta$. The $\Z[G\times G]$-action is diagonal, for example
$$(t,1)\cdot \alpha\otimes \alpha = \beta\otimes\beta,$$
$$(t,1)\cdot \alpha\otimes \beta= \beta\otimes (-\alpha -\beta)= -\beta\otimes \alpha - \beta\otimes \beta$$
and so on.
Consider the homomorphism
$$T: I\otimes I\to I\otimes I$$
which interchanges the factors. One has $T(\alpha\otimes \beta)=\beta\otimes \alpha$, $T(\beta\otimes \alpha)=\alpha\otimes\beta$ and $T$ acts identically on
two other generators $\alpha\otimes \alpha$ and $\beta\otimes\beta$. It is easy to see that $T$ is a $\Z[G\times G]$-homomorphism and hence can be viewed as a homomorphism of local systems.

Let $I\wedge I\subset I\otimes I$ denote the subgroup generated by the element $\alpha\otimes \beta - \beta\otimes \alpha$. One observes that
$I\wedge I=\Z$ has a trivial $\Z[G\times G]$- action; in particular it is a $\Z[G\times G]$-submodule of $I\otimes I$. Denote the factor module by $S(I)$; it is the symmetric square of $I$. We have the following exact sequence of local systems
$$0\to I\wedge I  \stackrel i \to I\otimes I \stackrel j \to S(I) \to 0$$
(recall that $I\wedge I=\Z$ is trivial) which induces an exact sequence
$$\to H^n(X\times X;I\wedge I) \stackrel{i_\ast}\to H^n(X\times X; I\otimes I) \stackrel{j_\ast}\to H^n(X\times X; S(I))\to $$

We claim that the class $\vv_X^2\in H^2(X\times X;I\otimes I)$ satisfies
\begin{eqnarray}j_\ast(\vv_X^2)\, =\, 0 \, \in \, H^2(X\times X; S(I)).\end{eqnarray}
From the skew-commutativity property of cup-products it follows that $T_\ast(\vv_X^2)= -\vv_X^2$. Since $j=j\circ T$ we obtain
$$j_\ast(\vv^2_X) = j_\ast T_\ast(\vv_X^2) = - j_\ast(\vv_X^2),\quad \mbox{i.e.} \quad 2j_\ast(\vv_X^2) =0.$$
On the other hand, by Corollary \ref{cor1} one has $3j_\ast(\vv_X^2)=0$ which together with above implies that $j_\ast(\vv_X^2)=0$.

From the long exact cohomological sequence sequence we obtain that
\begin{eqnarray}\label{w}
\vv_X^2 = i_\ast(w)\quad \mbox{for some}\quad w\in H^2(X\times X;\Z).
\end{eqnarray}
We claim that any class $w\in H^2(X\times X;\Z)$ satisfying (\ref{w}) is annihilated by multiplication by $6$, i.e.
\begin{eqnarray}\label{six}
6w=0.
\end{eqnarray}
Indeed, consider the map $A: I\otimes I\to I \wedge I =\Z$ given by $A(x)=x-T(x)$ for $x\in I\otimes I$. Clearly $A$ is a homomorphism of local systems and $A\circ i: I\wedge I \to I\wedge I$ is multiplication by $2$. Hence we obtain $2w=A_\ast\circ i_\ast(w)=A_\ast(\vv_X^2)$ which implies (\ref{six}) since $3\vv_X=0$.

By the K\"unneth theorem using $H^1(X;\Z)=0$ one can write
$$w=a\times 1 +1\times b$$ where $a, b\in H^2(X;\Z)$ with $6a=0=6b$.
Then $$\vv_X^{2n}= (\vv_X^2)^n= i_\ast(w^n) = \sum_{k=0}^n \binom{n}{k}\, i_\ast(a^k\times b^{n-k}).$$
If $n$ is odd each term in the last sum vanishes for dimensional reasons. Suppose now that $n$ is even, $n=2m$. Then we obtain
$$\vv_X^{2n} = \binom{2m}{m} a^{m}\times b^{m}.$$
We have already mentioned that the binomial coefficient $\binom{2m}{m}$ is always even. It is divisible by $3$ if the 3-adic expansion of $m$ contains at least one digit $2$, see \cite{FG}, Lemma 19. This shows that $\vv_X^{2n}=0$ under the conditions indicated in statement (i) of Theorem \ref{thm2} and implies statement (i) by applying Theorem \ref{thm4}.

Next we prove statement (ii) of Theorem \ref{thm4}. Let $n\ge 1$ be such that its 3-adic expansion contains only digits 0 and 1. Then the binomial coefficient
$\binom{2n}{n}$ is not divisible by $3$, see \cite{FG}, Lemma 19.

Consider the lens space $L_3^{2n+1}=S^{2n+1}/\Z_3$ where $S^{2n+1}\subset \C^{n+1}$ is the unit sphere and $\Z_3=\{1, \omega, \omega^2\}$ acts as the group of roots of $1$, where $\omega=\exp{2\pi i/3}$. It is well known that the lens space has a cell decomposition with a unique cell in every dimension $i$ for $i=0, 1, \dots, 2n+1$, see \cite{Hat}, page 144-145. We will denote by $X$ the skeleton of $L_3^{2n+1}$ of dimension $2n$. Note that $X$ has homotopy type of the lens space $L_3^{2n+1}$ with one point removed. We show below that $\tc(X)=4n+1$ using the technique developed in \cite{FG}.

The cohomology algebra $H^\ast(X;\Z_3)$ can be described as the quotient of the polynomial algebra $\Z_3[x, y]$ with two generators $x$ of degree 1 and $y$ of degree 2 subject to relations $x^2=0$, $y^{n+1}=0$ and $xy^n=0$, see \cite{Hat}, page 251. Here $x\in H^1(X;\Z_3)$ is the generator and $$y=\beta(x)\in H^2(X;\Z_3)$$ is the image of $x$ under the Bockstein homomorphism $\beta: H^1(X; \Z_3)\to H^2(X;\Z_3)$ corresponding to the exact sequence
$$0\to \Z_3\to \Z_9\to \Z_3\to 0.$$
The classes $y^k$, where $k=0,1, \dots, n$, together with $xy^j$ for all $j=0, 1, \dots, n-1$ form an additive basis of $H^\ast(X;\Z_3)$.
By the K\"unneth theorem one has
$$H^\ast(X\times X;\Z_3) = H^\ast(X; \Z_3) \otimes H^\ast(X;\Z_3)$$ and therefore the classes
$$x^ay^b\times x^cy^d\, \in\,  H^\ast (X\times X;\Z_3)$$
 where $a, c\in \{0,1\}$ and $b, d\in \{0,1, \dots,n\}$ and
$(a, b)\not=(1, n)$, $(c,d)\not=(1, n)$ form an additive basis.
We denote by $\bar x$ and
$\bar y$ the classes $$\bar x= x\times 1-1\times x\in H^1(X\times X;\Z_3), \quad \bar y= y\times 1-1\times y\in H^2(X\times X;\Z_3).$$
It is shown in \cite{FG} that $\beta(\bar x) = \bar y$ and therefore the class $\bar y$ has weight two with respect to fibration (\ref{fibration}).

Recall that a cohomology class $u\in H^\ast(X\times X;R)$ is said to have {\it weight greater than or equal to $k$} (notation $\wg(u) \ge k$) if the restriction $u|A=0$ vanishes for any open subset
$A\subset X\times X$ with $\tc_X(A)\le k$, see \cite{FG} and \cite{invit}, \S 4.5. Here $\tc_X(A)$ denotes {\it the relative topological complexity} of a subset
$A\subset X\times X$; the latter is defined as the smallest number $r$ such that $A$ admits an open cover $A=U_1 \cup \dots\cup U_r$ with the property that the projections
$X\leftarrow U_i \to X$ on the first and the second factors are homotopic to each other, for all $i=1, \dots, r$.

 By Lemma 4.39 of \cite{invit} one has $$\wg((\bar y)^{2n})\ge 2n \cdot \wg(\bar y)  \ge 4n$$ and the nontriviality of the power $(\bar y)^{2n}\in H^{4n}(X\times X;\Z_3)$ would imply
 $\tc(X) \ge 4n+1$, according to Proposition 4.36 of \cite{invit}. The opposite inequality $\tc(X)\le 4n+1$ follows directly from (\ref{upperbound}) giving
 $\tc(X) =4n+1$ as desired.

By a direct computation one has
 $$(\bar y)^{2n} = (-1)^n \binom{2n}{n} y^n\times y^n$$
 and the binomial coefficient $\binom{2n}{n}$ is mutually prime to $3$ due to the fact that the 3-adic expansion of $n$ involves only small digits $n_i\in \{0,1\}$, see Appendix B from \cite{FG}. Thus we obtain $(\bar y)^{2n} \not = 0$ completing the proof.

\bibliographystyle{alpha}

\end{document}